\newcommand{\nc}{\newcommand}
\nc{\w}{\omega}
\nc{\s}{\sigma}
\nc{\cl}[1]{\overline{#1}}
\nc{\sm}{\setminus}
\nc{\sbst}{\subseteq}
\newtheorem{thm}{Theorem}[section]
\newtheorem{lem}[thm]{Lemma}
\newtheorem{clm}[thm]{Claim}
\newtheorem{cor}[thm]{Corollary}
\newtheorem{qu}[thm]{Question}
\theoremstyle{definition}
\theoremstyle{remark}
\newcommand{\uhr}{\upharpoonright}
\newcommand{\nothing}[1]{}
\begin{document}

\title[Sequential properties of function spaces]{Sequential properties of function spaces with the compact-open topology}

\author{Gary Gruenhage}
\address[Gruenhage]{Department of Mathematics and Statistics, Auburn University, Auburn, AL 36830, USA}
\email{garyg@auburn.edu}

\author{Boaz Tsaban}
\address[Tsaban]{Department of Mathematics, Bar-Ilan University, Ramat-Gan 52900, Israel}
\email{tsaban@math.biu.ac.il}

\author{Lyubomyr Zdomskyy}
\address[Zdomskyy]{Kurt G\"odel Research Center for Mathematical Logic, University of Vienna,
W\"ahringer Str.\ 25, 1090 Vienna, Austria}
\email{lzdomsky@logic.univie.ac.at}


\begin{abstract}
Let $M$ be the countably infinite metric fan.   We show that
$C_k(M,2)$ is sequential and contains a closed copy of Arens space $S_2$.  It follows
that if $X$ is metrizable but not locally compact, then $C_k(X)$ contains a closed copy
of $S_2$, and hence does not have the property AP.

We also show that, for any zero-dimensional Polish space $X$,  $C_k(X,2)$
is sequential if and only if $X$ is either locally compact or the derived set $X'$ is compact.
In the case that $X$ is a non-locally compact Polish space whose derived set is compact,
we show that all spaces $C_k(X, 2)$
are homeomorphic, having the  topology determined by an increasing  sequence of Cantor subspaces, the $n$th one
nowhere dense in the $(n+1)$st.
\end{abstract}

\maketitle

\section{Introduction}

Let $C_k(X)$ be the space of continuous real-valued functions on $X$ with the
compact-open topology.  $C_k(X)$ for metrizable $X$ is typically not a $k$-space,
in particular not sequential.
 Indeed, by a theorem of R. Pol \cite{pol},  for $X$
paracompact first countable (in particular, metrizable),
$C_k(X)$ is a $k$-space if and only if $X$ is locally compact, in which case $X$ is a topological sum of locally compact $\s$-compact spaces
 and  $C_k(X)$ is a product of completely metrizable spaces.
 A similar result holds for $C_k(X,[0,1])$:
it is a $k$-space if and only if $X$ is the topological sum of
 a discrete space and a locally compact $\s$-compact space, in which case $C_k(X)$
is the product of a compact
 space and a completely metrizable space.
It follows that, for separable metric $X$, the following are equivalent:
\begin{enumerate}
\item $C_k(X)$ is a $k$-space;
\item $C_k(X)$ is first countable;
\item $C_k(X)$ is a complete separable metrizable space, i.e., a Polish space;
\item $X$ is a locally compact Polish space.
\end{enumerate}
The same equivalences hold for $C_k(X, [0,1])$.
On the other hand, for Polish $X$, $C_k(X)$ always has the (strong) Pytkeev
property \cite{tz}.

A space $X$ has the property \emph{AP} if whenever $x\in \cl{A}\sm
A$, there is some $B\sbst A$ such that $x\in \cl{B}\sbst
A\cup\{x\}$.   $X$ has the property \emph{WAP} when a subset $A$
of $X$ is closed if and only if there is no $B\sbst A$ such that $|\cl{B}\sm
A|=1$. Thus, every Fr\'echet space is AP and every sequential
space is WAP.  It was asked in \cite{gt} whether $C_k(\w^\w)$ is
WAP.

In this note, we first show that if $X$ is metrizable but not
locally compact, then $C_k(X)$ contains a closed copy of Arens space
$S_2$, and hence is not AP.  In fact, such a closed copy of $S_2$ is
contained in $C_k(M,2)$, where $M$ is the countable metric fan.  We
then show that $C_k(M,2)$ is sequential, in contrast to the full function space $C_k(M)$.
 Next we show that for a zero-dimensional Polish
space $X$, if $C_k(X,2)$ is not metrizable (which is the case if and only if
$X$ is not locally compact), then $C_k(X,2)$ is sequential if and only if the
derived set $X'$ is compact. We obtain a complete description of $C_k(X,2)$ for a
non-locally compact Polish $X$ such that $X'$ is compact: any such $C_k(X,2)$
is homeomorphic to the space $(2^\w)^{\infty}$, which is the space
with the  topology determined by an increasing sequence of
Cantor sets, the $n$th one nowhere dense in the $(n+1)$st.

\section{When $C_k(X)$ contains $S_2$}

\emph{Arens's space} $S_2$ is the set
$$\{(0,0),(1/n,0),(1/n,1/nm):n,m\in\w\setminus\{0\}\}\sbst
\mathbb{R}^2$$
carrying the strongest topology inducing the
original planar topology on the convergent sequences
$C_0=\{(0,0),(\frac1n,0):n >0\}$ and $C_n=\{(\frac1n,0),
(\frac1n,\frac1{nm}):m>0\}$, $n>0$. The \emph{sequential fan} is the
quotient space $S_\omega=S_2/C_0$ obtained from the Arens space
by identifying the points of the sequence $C_0$ \cite{Lin}.
$S_\omega$ is a non-metrizable Fr\'echet-Urysohn space, and $S_2$ is sequential and not Fr\'echet-Urysohn.
In fact, any space which is sequential but not Fr\'echet-Urysohn contains $S_2$ as a subspace.

The \emph{countably infinite metric fan} is the space
$M = (\w\times \w) \cup \{\infty\}$, where points of $\w\times \w$ are isolated, and the
basic neighborhoods of $\infty$ are $U(n) = \{\infty\}\cup ((\w\setminus n)\times \w)$, $n\in \w$.
$M$ is not locally compact at its non-isolated point $\infty$. 

\begin{lem}\label{l2_1}
$C_k(M,2)$ contains a closed copy of $S_2$.
\end{lem}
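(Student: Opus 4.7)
The plan is to exhibit an explicit closed copy of $S_2$ inside $C_k(M,2)$. Two preliminary facts drive everything: (a) a function $f:M\to 2$ with $f(\infty)=0$ is continuous iff its support is contained in some finite union of columns $\{0,\dots,k-1\}\times\w$; and (b) a compact subset $K\subseteq M$ containing $\infty$ meets each $\{j\}\times\w$ in a finite set.

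For the construction, I let $\mathbf{0}$ denote the zero function and, for $n,m\ge 1$, put $f_n=\chi_{\{(0,n)\}}$ and $g_{n,m}=\chi_{\{(0,n),(n+1,m)\}}$. Each support lies in at most two columns, so by (a) these are all in $C_k(M,2)$. Set $T=\{\mathbf{0}\}\cup\{f_n:n\ge 1\}\cup\{g_{n,m}:n,m\ge 1\}$. My aim is to show that, under the bijection $\mathbf{0}\leftrightarrow(0,0)$, $f_n\leftrightarrow(1/n,0)$, $g_{n,m}\leftrightarrow(1/n,1/(nm))$, the subspace $T$ is homeomorphic to $S_2$, and that $T$ is closed in $C_k(M,2)$.

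The convergences are immediate from (b), using that $h_k\to h$ in $C_k(M,2)$ iff $h_k|_K=h|_K$ eventually for every compact $K$: (i) for fixed $n$, $g_{n,m}\to f_n$ as $m\to\infty$, since the symmetric difference $\{(n+1,m)\}$ eventually avoids any compact $K$; (ii) $f_n\to\mathbf{0}$ analogously. The key non-convergence is that $\mathbf{0}$ is not a sequential limit of any sequence drawn from $\{g_{n,m}\}$: for any $\{g_{n_k,m_k}\}$, against the compact set $K_0=\{\infty\}\cup(\{0\}\times(\w\setminus\{0\}))$ we would need $\{(0,n_k),(n_k+1,m_k)\}\cap K_0=\emptyset$ eventually, but $(0,n_k)\in K_0$ since $n_k\ge 1$. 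Isolatedness of each $g_{n,m}$ in $T$ is witnessed by $[\{(0,n)\},\{1\}]\cap[\{(n+1,m)\},\{1\}]$, and basic compact-open neighborhoods of $f_n$ and of $\mathbf{0}$ are seen to restrict to $T$ in precisely the form of $S_2$-basic neighborhoods at $(1/n,0)$ and $(0,0)$, the appropriate compact sets being built by listing the finitely many isolated points in each column that one wishes to ``mark''.

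The only real work is showing $T$ is closed. Take $h\in C_k(M,2)\setminus T$. If $h(\infty)=1$, then the subbasic $[\{\infty\},\{1\}]$ separates. Otherwise $h=\chi_A$ with $A$ not among $\emptyset$, $\{(0,n)\}$, $\{(0,n),(n+1,m)\}$ (for $n,m\ge 1$). If $|A|\ge 3$, three distinct $a_1,a_2,a_3\in A$ give the neighborhood $\bigcap_{i=1}^{3}[\{a_i\},\{1\}]$, which excludes every member of $T$ since no such member has support of size $\ge 3$. The small cases $|A|\in\{1,2\}$ are short combinatorial checks, separating $h$ from the at most one or two nearby $g_{n,m}$'s by a neighborhood of the form $\bigcap_{a\in F}[\{a\},\{1\}]\cap\bigcap_{j}[\{b_j\},\{0\}]$ with $F\subseteq A$ finite and witnesses $b_j$ chosen to eliminate the handful of candidate $g_{n,m}$. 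This case analysis is the main, though routine, obstacle; everything else follows mechanically from (a), (b), and the definition of the compact-open topology.
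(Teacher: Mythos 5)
Your construction --- characteristic functions of one\mbox{-} and two\mbox{-}point sets, with the second support point of $g_{n,m}$ pushed out to column $n+1$ --- is a genuinely different route from the paper's, which uses characteristic functions of complements of large clopen neighborhoods $U(n,k)$ of $\infty$ and then invokes a regular-$G_\delta$ lemma of Lin to upgrade a copy of $S_2$ to a closed one; your direct closedness argument via supports of size at most $2$ is a real simplification and that part is fine. But there is a concrete error at exactly the step you label as key. The set $K_0=\{\infty\}\cup(\{0\}\times(\w\setminus\{0\}))$ is \emph{not} compact in $M$: it meets the column $\{0\}\times\w$ in an infinite set, violating your own fact (b), and indeed $\{0\}\times\w$ is closed and discrete in $M$ (it is disjoint from the basic neighborhood $U(1)$ of $\infty$), so $K_0$ is an infinite discrete set plus a point that is not a limit point of it. Worse, if $K_0$ were compact, then $\{h: h\uhr K_0\equiv 0\}$ would be a neighborhood of $\mathbf{0}$ missing every $f_n$ as well, so $\mathbf{0}$ would be isolated in $T$, contradicting your own claim (ii) that $f_n\to\mathbf{0}$. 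As written, the ``key non-convergence'' is not established.

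Moreover, the statement you chose to prove is not quite the one needed: to see that $T$ carries the $S_2$-topology at $\mathbf{0}$ you must show that for \emph{every} sequence $(m_n)$ the set $\{g_{n,m}: n\ge 1,\ m<m_n\}$ fails to have $\mathbf{0}$ in its closure --- this is what makes the $S_2$-basic neighborhoods of $(0,0)$ into genuine neighborhoods in $T$ --- and mere non-convergence of sequences drawn from $\{g_{n,m}\}$ is weaker. The repair is exactly the paper's move: choose a compact witness meeting each column finitely, namely $K=\{\infty\}\cup\{(0,n): 1\le n<N\}\cup\{(n+1,m): n\ge N,\ m<m_n\}$, which contains a support point of every $f_n$ and $g_{n,m}$ outside the prescribed $S_2$-neighborhood while avoiding the supports of all functions inside it. With that substitution (it is the precise version of your closing remark about ``marking finitely many points in each column''), your proof goes through and yields the lemma.
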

\begin{proof}

For each $n>1$ and each $k$, let
$$U(n,k) =( \{0\}\times n)\  \cup\  ((n\setminus \{0\}) \times k)\
\cup\ U(n),$$
and let $f_{n,k}$ be the member of $C_k(M)$ which is $0$ on
$U(n,k)$ and $1$ otherwise (i.e., the characteristic function of
$M\setminus U(n,k)$).

Let $f_n\in C_k(M)$ be the function which is $1$ on $\{0\}\times (\w\setminus n)$
and $0$ otherwise, and let $c_0$ be the constant $0$ function.
For each $n$, $\lim_k f_{n,k}=f_n$, and $\lim_n f_n=c_0$. Thus, $c_0$ is a limit point of the set $A=\{f_{n,k}: n>1,k\in  \w\}$.
Let $S=\{f_n:n>1\}$, and $X=\{c_0\}\cup S\cup A$.

We claim that $X$ is homeomorphic to the Arens space $S_2$.
It suffices to show that for each sequence $(k_n)_{n>1}$,
$c_0$ is not in the closure of the set $\{f_{n,k}:k<k_n, \ n>1\}$.
Given $(k_n)_{n>1}$, set
$K=\{(n-1,k_n):n>1\}\cup\{\infty\}$. Then $K$ is a sequence convergent to
$\infty$, and for each $f_{n,k}\in \{f_{n,k}:k<k_n, \ n>1\}$
there exists $x\in K$, namely, $x=(n-1,k_n)$, such that $f(x)=1$.
Therefore ${\{f_{n,k}:k<k_n, \ n>1\}}$ does not intersect the neighborhood
$\{f\in C_k(M,2): f\uhr K\equiv 0\}$ of $c_0$, and hence does not
contain $c_0$ in its closure.

By \cite[Corollary 2.6]{Lin}, if every point $z$ in a topological space $Z$
is regular $G_\delta$
(i.e., $\{z\}$  is equal to $\bigcap_n \cl{U_n}$ for some open neighborhoods $U_n$
of $z$),
and $Z$ contains a copy of $S_2$, then $Z$
contains a closed copy of $S_2$. Since every point of $C_k(M,2)$
is regular $G_\delta$, $C_k(M,2)$ contains a closed copy of
$S_2$. In fact, the space $X$ constructed above is closed, even in $C_p(M,2)$.
\end{proof}

\begin{thm}  \label{t2_1}
If $X$ is metrizable and not locally compact, then $C_k(X)$ contains
closed copies of $S_2$ and $S_\w$. \qed
\end{thm}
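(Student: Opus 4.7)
The plan is to deduce the theorem from Lemma~\ref{l2_1} by first embedding the countable metric fan $M$ as a closed subspace of $X$, and then transporting closed copies of $S_2$ and $S_\w$ in $C_k(M,2)$ up to $C_k(X)$ via a continuous linear extension operator. Any metrizable space that fails to be locally compact at some point $p$ contains a closed copy of $M$ with $\infty\mapsto p$: fixing a compatible metric on $X$, each closed ball $\cl{B}(p,1/n)$ is non-compact, hence by sequential compactness contains an infinite closed discrete subset of $X$, and a routine book-keeping over successive annuli produces rows $\{y_{n,k}\}_k$ whose union together with $p$ is homeomorphic to $M$ and closed in $X$.

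Next, by Dugundji's extension theorem there is a continuous linear operator $e\colon C_k(M)\to C_k(X)$ with $e(f)\uhr M=f$ for every $f\in C(M)$; continuity in the compact-open topology follows because Dugundji's partition-of-unity construction makes $e(f)\uhr K$ depend on only finitely many values of $f$, for each compact $K\sbst X$. Letting $r\colon C_k(X)\to C_k(M)$ denote the continuous restriction map, one has $r\circ e=\mathrm{id}_{C_k(M)}$, so $e$ is a topological embedding; moreover, $\pi:=e\circ r$ is a continuous idempotent on $C_k(X)$ whose image $e(C_k(M))$ equals its (closed) fixed-point set.

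Lemma~\ref{l2_1} already supplies a closed copy $Z_2$ of $S_2$ in $C_k(M,2)\sbst C_k(M)$. An analogous simpler construction yields a closed copy $Z_\w$ of $S_\w$ there: take $g_{n,k}:=\chi_{\{(n,k)\}}$ and $Z_\w:=\{c_0\}\cup\{g_{n,k}:n,k\in\w\}$; each column $(g_{n,k})_k$ converges to $c_0$ in $C_k(M,2)$ since every compact $K\sbst M$ meets $\{n\}\times\w$ in finitely many points, yet no diagonal $(g_{n,k_n})_n$ converges to $c_0$ because $\{(n,k_n):n\in\w\}\cup\{\infty\}$ is compact in $M$. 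The basic neighborhoods of $c_0$ in $Z_\w$ are precisely those excluding finitely many values of $k$ per column, matching the apex neighborhood base of $S_\w$, and an elementary check shows $Z_\w$ is closed in $C_k(M,2)$. The images $e(Z_2)$ and $e(Z_\w)$ are then topological copies of $S_2$ and $S_\w$ in $C_k(X)$; they remain closed because $Z_2$ and $Z_\w$ are closed in $C_k(M)$, $r$ inverts $e$ continuously on $e(C_k(M))$, and $e(C_k(M))$ is closed in $C_k(X)$ via the idempotent $\pi$. The one nontrivial step beyond Lemma~\ref{l2_1} and the fan-embedding fact is this preservation of closedness after extension, which the continuous projection $\pi=e\circ r$ handles cleanly.
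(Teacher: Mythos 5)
Your argument is correct and, for the $S_2$ half, follows the paper's route exactly: embed the metric fan $M$ as a closed subspace of $X$ (the paper simply cites van Douwen's Lemma 8.3 for this rather than redoing the annulus construction) and transport the closed copy of $S_2$ from Lemma~\ref{l2_1} into $C_k(X)$ via the Dugundji extender, whose continuity and embedding properties for the compact-open topology are precisely Michael's theorem, which the paper cites. Where you genuinely diverge is the $S_\w$ half: the paper invokes the general fact that a topological group contains a closed copy of $S_2$ if and only if it contains a closed copy of $S_\w$, whereas you build a closed copy of $S_\w$ directly in $C_k(M,2)$ from the characteristic functions $\chi_{\{(n,k)\}}$. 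This works --- compact subsets of $M$ meet each column $\{n\}\times\w$ finitely, so the trace on your set of the neighborhood filter of $c_0$ is exactly the fan filter at the apex, and the set is closed even under pointwise convergence --- and it buys independence from the group-theoretic lemma at the cost of one extra explicit verification. Two small points to tighten: (i) your stated reason for continuity of $e$, namely that $e(f)\uhr K$ depends on only finitely many values of $f$, fails for compact $K$ that accumulates on $M$, where infinitely many members of the partition of unity are nonzero on $K$; the correct statement (Michael's) is that $e(f)\uhr K$ is controlled by $f\uhr K'$ for a suitable compact $K'\sbst M$, so it is better to cite that result than to argue via local finiteness. (ii) You pass from ``$Z_2$ is closed in $C_k(M,2)$'' to ``$Z_2$ is closed in $C_k(M)$''; this is fine because $C(M,2)$ is pointwise-closed in $C(M)$, but it deserves a word.
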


\begin{proof}

By Lemma 8.3 of \cite{vD}, a first
countable space $X$ contains a closed topological copy of the
space $M$ if and only if $X$ is not locally compact.
E.A. Michael \cite[Theorem 7.1]{Mi} observed that, for $Y$ a closed subspace
of a metrizable space $X$,  the linear extender $e:C(Y)\to C(X)$
given by the Dugundji extension theorem is a homeomorphic embedding when both $C(Y)$ and $C(X)$
are given the compact-open
topology (or the topology of uniform convergence, or pointwise convergence).
Thus we have that for each metrizable space $X$ which is not locally compact,
$C_k(M,2)$ is closely embedded in $C_k(X)$, and hence $C_k(X)$ contains a closed copy
of $S_2$.   Finally, $C_k(X)$ also contains a closed copy of $S_\w$ because for any topological group $G$,
$G$ contains a (closed) copy of $S_2$ if and only if it contains a closed copy of $S_\w$.
\end{proof}

{\bf Remark.} C.J.R. Borges \cite{Bo} showed that the Dugundji extension theorem holds for
the class of stratifiable spaces, and hence Theorem~\ref{t2_1} holds more generally for first
countable stratifiable spaces.

\section{ Sequentiality of $C_k(X,2
)$} \label{sequent}

A topological space $X$ \emph{carries the inductive topology}
with respect to a closed cover $\mathcal{C}$ of $X$, if for each $F\sbst X$, $F$ is closed
whenever $F\cap C$ is closed in $C$ for each $C\in \mathcal{C}$.
A topological space is a \emph{$k$-space} (respectively, \emph{sequential space})
if it carries the inductive topology with respect to its cover by compact (respectively, compact metrizable) subspaces.
$X$ is sequential if and only if for every non-closed  $A\sbst X$, there exists a sequence in $A$ converging to a point in $X\setminus A$.

Since the metric fan $M$ is not locally compact, $C_k(M)$ and $C_k(M,[0,1])$ are not
$k$-spaces \cite{pol}.  However, we have the following.

\begin{thm}  \label{t3_1}
$C_k(M,2)$ is sequential.
 \end{thm}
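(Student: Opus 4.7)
\medskip

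\noindent\emph{Proof proposal.}

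The plan is to exploit the fact that $C_k(M,2)$ has a natural exhaustion by compact Cantor subspaces. For each $n\ge 0$, set $B_n:=\{f\in C_k(M,2):f|_{U(n)}\equiv 0\}$; each $B_n$ is CO-closed (as an intersection of sub-basic clopen sets), and via $f\mapsto f|_{n\times\w}$ it is homeomorphic to the Cantor space $2^{n\times\w}$, since any compact $K\sbst M$ meets $n\times\w$ in a finite set. Writing $C_\varepsilon:=\{f:f(\infty)=\varepsilon\}$ gives a clopen decomposition $C_k(M,2)=C_0\sqcup C_1$ with $C_0=\bigcup_n B_n$; combined with the topological group structure under pointwise XOR, a translation reduces the theorem to the following claim: if $A\sbst C_k(M,2)$ is sequentially closed and $c_0\notin A$, then $c_0\notin\overline{A}$.

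A preliminary convergence lemma is needed: every CO-convergent sequence is eventually confined to a single $B_n$. Suppose $f_k\to c_0$ while the levels $m_k:=\min\{m:f_k\in B_m\}$ are unbounded. After passing to a subsequence with $m_k$ strictly increasing, pick $x_k\in\{m_k-1\}\times\w$ with $f_k(x_k)=1$; the set $K:=\{\infty\}\cup\{x_k:k\in\w\}$ meets each column of $\w\times\w$ at most once, so it is compact in $M$, yet $f_k(x_k)=1$ contradicts $f_k|_K\to 0$. The general case follows by XOR translation. So $A$ is sequentially closed in $C_k(M,2)$ exactly when $A\cap B_n$ is closed in $B_n$ for every $n$.

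The heart of the argument is an inductive construction of finite sets $G_n\sbst\{n-1\}\times\w$ whose cumulative unions $F_n:=G_1\cup\dots\cup G_n$ separate $c_0$ from $A\cap B_n$ inside $B_n$, meaning every $h\in A\cap B_n$ satisfies $h|_{F_n}\not\equiv 0$. The main obstacle is confining each $G_n$ to the ``fresh'' column $\{n-1\}\times\w$; this is exactly what forces $\bigcup_n G_n$ to meet each column of $M$ finitely, so that $K:=\{\infty\}\cup\bigcup_n G_n$ is compact in $M$. Given $F_{n-1}$, I would argue by contradiction for the existence of $G_n$: if no finite $G\sbst\{n-1\}\times\w$ hits every $h\in A_n^\star:=\{h\in A\cap B_n:h|_{F_{n-1}}\equiv 0\}$, then for each $k$ there is $h_k\in A_n^\star$ with $h_k(n-1,j)=0$ for $j\le k$. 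By compactness of $B_n\cong 2^{n\times\w}$ a subsequence of $(h_k)$ converges to some $h^\star\in B_n$, and the closedness of $A\cap B_n$ yields $h^\star\in A$. Pointwise passage to the limit shows $h^\star\equiv 0$ on all of $\{n-1\}\times\w$ and on $F_{n-1}$, placing $h^\star\in A\cap B_{n-1}$ with $h^\star|_{F_{n-1}}\equiv 0$, contradicting the inductive separating property of $F_{n-1}$ (or $c_0\notin A$ if $h^\star=c_0$).

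With the $G_n$ in hand, the basic CO-neighborhood $V_K:=\{f:f|_K\equiv 0\}$ of $c_0$ is disjoint from $A$: any $h\in A\cap V_K$ satisfies $h(\infty)=0$, so $h\in B_n$ for some $n$, and $F_n\sbst K$ then forces $h|_{F_n}\equiv 0$, contradicting the separating property of $F_n$. Hence $c_0\notin\overline{A}$, completing the proof.
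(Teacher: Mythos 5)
Your argument is correct and is essentially the paper's own proof: you build the same increasing exhaustion of $\{f: f(\infty)=0\}$ by Cantor subspaces $B_n\cong 2^{n\times\omega}$, run the same induction (using compactness of $2^{n\times\omega}$ plus sequential closedness of $A$ to extract a limit $h^\star$ that violates the previous separating set), and finish with the same compact set $K=\{\infty\}\cup\bigcup_n G_n$ and neighborhood $\{f: f\uhr K\equiv 0\}$; the paper merely phrases it by contradiction and takes the finite sets $G_n$ to be initial segments $\{i\}\times k_i$ of the columns. The only quibble is your preliminary ``convergence lemma,'' which as stated fails for sequences converging to a limit with value $1$ at $\infty$, but it is never actually used: the main construction only needs the trivial direction that sequential closedness of $A$ makes $A\cap B_n$ closed in the compact metrizable $B_n$.
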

\begin{proof}
 Suppose not. Then there is $A\sbst C_k(M,2)$
which is not closed and yet contains
all limit points of convergent sequences of its elements.
As $M$ is zero-dimensional, $C_k(M, 2)$ is homogeneous. Thus, without loss of generality,
we may assume that $c_0 \in\cl{A}\setminus A$, where $c_0$ is the constant $0$ function.
We may additionally assume that $f (\infty) = 0$  for all $f \in A$.
    Let    $ A_n = \{ f \in  A : f (U (n)) = \{0\}\}$.


Note that the sets $A_n$ are increasing with $n$, and their union is $A$.
\begin{clm} \label{clma}
There exists a sequence $(k_n)_{n\in\w}$  such that for each $n$ with $f \in A_{n+1}$,
$1 \in f (\bigcup_{i\leq n} \{i\} \times k_i)$.
\end{clm}
\begin{proof}
By induction. Assume that for all $i < n$, there are $k_i$ such that $f \in A_{i+1}$ implies
$1 \in f (\bigcup_{j\leq i} \{j\} \times k_j)$, but that for each $k$, there is
$f_k \in A_{n+1}$ such that $f_k((\bigcup_{i<n} \{i\}\times k_i)\cup(\{n\}\times k)) =
 \{0\}$.
Let $f_k' = f_k\uhr (n+1)\times\w$.
As $2^{(n+1)\times\w}$ is homeomorphic to the Cantor space,
there is a subsequence $\{f'_{k_i}\}$ of $\{f'_k\}$, converging
to an element $f' \in 2^{(n+1)\times\w}$.
As $f_k \in A_{n+1}$, $f_k(U(n + 1)) = \{0\}$.
Define $g\in C_k(M)$ by $g(U (n + 1)) = \{0\}$ and $g \uhr (n + 1) \times\w = f'$.
Then in $C_k(M)$, $g=\lim_if_{k_i}$, and therefore $g \in A$.
As $f'_k(\{n\} \times k) = \{0\}$, $g(\{n\} \times\w) = \{0\}$.
As $g(U (n + 1)) = \{0\}$, $g(U (n)) = \{0\}$, and thus $g \in A_n$.
But $g(\bigcup_{i\leq n-1} \{i\} \times k_i) = \{0\}$
(indeed, this holds for all $f_k$'s), contradicting
the induction hypothesis.
\end{proof}

Let
$$K=(\bigcup_{i\in\w}\{i\} \times k_i )\cup \{\infty\}.$$
Let $V$ be the set of all functions which map $K$ into the interval $(-1/2, 1/2)$. 
Then $V$ is a neighborhood  of $c_0$ which misses $A$, a contradiction.
\end{proof}

We proceed to characterize the zero-dimensional Polish spaces $X$ such that $C_k(X,2)$ is sequential.

A topological space $Y$ has the \emph{strong Pytkeev property}
\cite{tz} (respectively, \emph{countable $\mathrm{cs}^\ast$-character})
if for each $y\in Y$, there is a \emph{countable} family $\mathcal N$
of subsets of $Y$, such that for each neighborhood $U$ of $y$ and
each $A\sbst Y$ with $y\in\cl{A}\sm A$ (respectively, each sequence $A$ in $Y\setminus\{y\}$ converging to $y$),
there is $N\in\mathcal N$ such that $N\sbst U$ and $N\cap A$ is infinite.

For every Polish space $X$ the space $C_k(X)$ has the strong
Pytkeev property \cite[Corollary~8]{tz}. Thus, any subspace of
$C_k(X)$ has the strong Pytkeev property, and therefore has countable $\mathrm{cs}^\ast$-character.

An \emph{$mk_\omega$-space} is a topological space which carries the inductive topology with
respect to a countable cover of compact metrizable subspaces.
A topological group $G$ is an \emph{$mk_\omega$-group} if $G$ is an $mk_\omega$-space.

$C_k(X,2)$ has a natural structure of a topological group.

\begin{thm}[\cite{bz}] \label{mainbz}
Let $G$ be a sequential non-metrizable topological group with countable
$\mathrm{cs}^*$-character.  Then $G$  contains an open
$mk_\omega$-subgroup $H$ and thus is homeomorphic
to the product $H\times D$ for some discrete space $D$.
\end{thm}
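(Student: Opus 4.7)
The plan splits naturally into two parts: (A) produce an open subgroup $H\sbst G$ that is an $mk_\omega$-space, and (B) deduce the product decomposition $G\cong H\times D$ from the existence of such an $H$. Part (B) is essentially formal: if $H$ is open in the topological group $G$, then every coset $gH$ is clopen, so picking a transversal $D\sbst G$ and giving it the discrete topology, the multiplication map $(h,d)\mapsto dh$ from $H\times D$ to $G$ is a bijection that is open and continuous on each slice $H\times\{d\}$, hence a homeomorphism. So the real content is (A).

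For (A), the plan is to manufacture $H$ from a countable $\mathrm{cs}^*$-network $\N=\{N_k:k\in\w\}$ at the identity $e$. First I would reduce to the case where each $N_k$ is a closed symmetric subset containing $e$ (replacing $N_k$ by $\cl{N_k\cup N_k^{-1}\cup\{e\}}$ and using that the $\mathrm{cs}^*$-network property is preserved by taking closures in a sequential space). The key step is then to show that, after possibly shrinking and recombining the $N_k$, we may assume each $N_k$ is \emph{compact and metrizable}. Because $G$ is sequential, a closed set that contains no infinite closed discrete subset and has countable character in an appropriate sense is compact; concretely, if some $N_k$ were non-compact, a standard diagonal argument against sequentiality would produce a sequence converging to $e$ that is not captured by any member of $\N$ inside a prescribed neighborhood, contradicting the $\mathrm{cs}^*$-character. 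Metrizability of $N_k$ follows since $N_k$ is a compact subspace of a space with the strong Pytkeev / countable $\mathrm{cs}^*$-character property, hence is first countable and compact, hence metrizable.

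Now I would define $H$ as the subgroup algebraically generated by $\bigcup_k N_k$. Enumerating the finite words over $\bigcup_k N_k$, one sees $H=\bigcup_{n\in\w}K_n$ where each $K_n$ is the continuous image of a finite product of finitely many $N_k$'s, hence compact metrizable. To show $H$ is open, observe that every sequence in $G$ converging to $e$ is eventually contained in some $N_k$ by the $\mathrm{cs}^*$-character property applied to the neighborhood $G$; thus $H$ is sequentially open at $e$, and sequentially open sets in a sequential space are open. In a topological group, a subgroup that contains a neighborhood of $e$ is open. Finally, to verify that $H$ carries the inductive topology with respect to $\{K_n\}$, suppose $F\sbst H$ meets each $K_n$ in a closed set but is not closed in $H$; by sequentiality of $H$ (inherited from $G$) pick a sequence in $F$ converging to some point $h\in H\sm F$; by the $\mathrm{cs}^*$-character at $h$ (translate the network from $e$) the sequence eventually lies in a compact metrizable neighborhood contained in some $K_n$, forcing $h\in\cl{F\cap K_n}=F\cap K_n\sbst F$, a contradiction.

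The main obstacle I foresee is the compact-metrizable reduction of the network elements $N_k$: this step is where the interplay between sequentiality, the $\mathrm{cs}^*$-character, and the group structure is used most delicately, and it is what rules out easy ``pathological'' non-metrizable sequential groups. Once that is in hand, the remainder of the argument is a clean assembly: countable union gives the $k_\omega$-structure, translation plus the network gives openness of $H$, and openness of $H$ gives the product decomposition $G\cong H\times D$.
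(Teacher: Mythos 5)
The paper does not prove this statement: Theorem~\ref{mainbz} is quoted from Banakh--Zdomskyy \cite{bz}, so there is no in-paper argument to measure yours against. Your part (B) is correct and standard: an open subgroup $H$ of a topological group has clopen cosets, and $G$ is homeomorphic to $H\times D$ with $D$ the discrete coset space.

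The genuine gap is in part (A), exactly at the step you flag as the main obstacle: the reduction to a countable $\mathrm{cs}^*$-network at $e$ consisting of compact metrizable sets. As written, your argument never uses the hypothesis that $G$ is non-metrizable, and that hypothesis is indispensable. Every metrizable group is sequential and has countable $\mathrm{cs}^*$-character (a countable neighborhood base at $e$ is already a $\mathrm{cs}^*$-network, since a convergent sequence is eventually in each basic neighborhood), yet a group such as $\mathbb{Z}^{\w}$ admits \emph{no} countable $\mathrm{cs}^*$-network at $e$ made of compact sets: given compact $N_0,N_1,\dots$, choose $x_n$ in the basic neighborhood $\{x: x_i=0 \text{ for } i<n\}$ with $x_n\notin N_0\cup\dots\cup N_n$ (possible because that neighborhood is closed and non-compact, hence not contained in the compact set $N_0\cup\dots\cup N_n$); then $x_n\to e$ but every $N_k$ meets $\{x_n:n\in\w\}$ in a finite set. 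Consequently the ``standard diagonal argument against sequentiality'' you invoke cannot exist at the stated level of generality, and $\mathbb{Z}^\w$ of course contains no open $mk_\w$-subgroup (such a subgroup would be $\sigma$-compact and contain a copy of $\mathbb{Z}^\w$). The real content of the theorem is precisely the dichotomy ``metrizable or locally $k_\w$'', and your sketch assumes it rather than proving it; the argument in \cite{bz} is a substantially more involved structure theory. Two secondary, fixable slips: the $\mathrm{cs}^*$-property yields only a subsequence inside some $N_k$, not that a convergent sequence is eventually in $N_k$ (the subsequence version still gives sequential openness of $H$); and in your last step the compact set $hN_j$ need not lie in a single $K_n$ merely because $H=\bigcup_n K_n$ with the $K_n$ closed and increasing --- you must construct the $K_n$ so that, say, $K_nN_n\sbst K_{n+1}$.
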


\begin{cor} \label{c1}
Let $G$ be a  sequential separable  topological group with
countable $\mathrm{cs}^\ast$-character. If $G$ is not metrizable,
then $G$ is $\sigma$-compact.
\end{cor}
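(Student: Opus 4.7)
The plan is to apply Theorem \ref{mainbz} directly and then extract $\sigma$-compactness from the resulting product decomposition. Since $G$ is sequential, non-metrizable, and has countable $\mathrm{cs}^\ast$-character, the theorem produces an open $mk_\omega$-subgroup $H\sbst G$ together with a homeomorphism $G\cong H\times D$ for some discrete space $D$.

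First I would argue that $H$ itself is $\sigma$-compact: by definition of an $mk_\omega$-space, $H$ carries the inductive topology with respect to a countable cover by compact metrizable subspaces, and in particular is covered by countably many compact sets. Next I would use separability of $G$ to show that $D$ is countable. Since $H$ is open in $G$, the family of left cosets $\{gH:g\in G\}$ is a pairwise disjoint cover of $G$ by open sets; separability implies the countable chain condition, so this family is countable. Equivalently, under the decomposition $G\cong H\times D$ the projection of a countable dense set onto $D$ must be dense in the discrete space $D$, forcing $|D|\leq\aleph_0$.

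Finally, a countable discrete space is trivially $\sigma$-compact (a countable union of singletons), and a finite product of $\sigma$-compact spaces is $\sigma$-compact. Therefore $G\cong H\times D$ is $\sigma$-compact, as required.

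There is no real obstacle here beyond invoking Theorem \ref{mainbz}; the only point requiring any care is the passage from separability of $G$ to countability of the coset space $G/H$, which is immediate from the ccc applied to the open partition by cosets of $H$.
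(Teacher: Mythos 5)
Your argument is correct and is exactly the derivation the paper intends: the corollary is stated without proof as an immediate consequence of Theorem~\ref{mainbz}, and your steps (the open $mk_\omega$-subgroup $H$ is $\sigma$-compact by definition, separability forces the open coset partition to be countable, and a countable union of $\sigma$-compact cosets is $\sigma$-compact) fill in precisely the omitted routine details.
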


\begin{lem} \label{metr}
Let $X$ be a zero-dimentional first countable  space. Then $C_k(X,2)$
is metrizable if and only if $X$ is locally compact and $\sigma$-compact.
\end{lem}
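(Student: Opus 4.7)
The plan is to prove the two implications separately. The easy direction---$X$ locally compact and $\s$-compact implies $C_k(X,2)$ metrizable---is classical: such $X$ is hemicompact, so $C_k(X,\mathbb{R})$ is (completely) metrizable by the standard hemicompact argument (implicit in the Pol theorem cited in Section~1), and $C_k(X,2)$ inherits metrizability as a subspace. Neither zero-dimensionality nor first countability plays a role here.

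For the converse, suppose $C_k(X,2)$ is metrizable. Since basic open neighborhoods of the constant function $c_0$ have the form $[K]:=\{f\in C_k(X,2):f\uhr K\equiv 0\}$ for compact $K\sbst X$, first countability at $c_0$ yields an increasing sequence $(K_n)_{n\in\w}$ of compact subsets of $X$ for which $\{[K_n]:n\in\w\}$ is a neighborhood base at $c_0$. I would next establish that $(K_n)$ witnesses hemicompactness of $X$: if some compact $K\sbst X$ failed to lie in any $K_n$, I could, for each $n$, pick $x_n\in K\sm K_n$ and---using zero-dimensionality together with compactness of $K_n$---produce a clopen $U_n\sbst X$ with $x_n\in U_n$ and $U_n\cap K_n=\emptyset$. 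The characteristic function $\chi_{U_n}$ then lies in $[K_n]\sm[K]$, contradicting that $[K_n]\sbst[K]$ for some $n$. Since singletons are compact, it follows that $X=\bigcup_n K_n$ is $\s$-compact.

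It remains to deduce local compactness. Assume toward contradiction that some $x\in X$ has no compact neighborhood. First countability and zero-dimensionality provide a decreasing clopen neighborhood base $(V_n)_{n\in\w}$ at $x$. For each $n$ I would choose $y_n\in V_n\sm K_n$ with $y_n\neq x$; such $y_n$ must exist, for otherwise $V_n\sbst K_n\cup\{x\}$ would be a closed subset of a compact set (since $V_n$, being clopen, is closed in $X$), making $V_n$ a compact neighborhood of $x$. With $y_n$ in hand, the set $K:=\{x\}\cup\{y_n:n\in\w\}$ is compact (as $y_n\to x$) but not contained in any $K_n$, contradicting the hemicompactness already established.

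The main obstacle will be securing $y_n\in V_n\sm K_n$ distinct from $x$; this is the step where both structural hypotheses---zero-dimensionality (to make $V_n$ closed) and first countability (to choose a clopen neighborhood base at $x$)---must be combined with the compactness of the $K_n$ produced in the previous step. Every other part of the argument then reduces to unwinding definitions.
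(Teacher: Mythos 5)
Your proof is correct and takes essentially the same approach as the paper's: both extract an increasing sequence $(K_n)$ of compact sets from a countable neighborhood base at $c_0$, and both derive the contradiction from a convergent sequence escaping every $K_n$ together with the characteristic function of a separating clopen set. The only difference is organizational --- you pass explicitly through hemicompactness and then treat local compactness separately, whereas the paper proves in one step that every point has a neighborhood contained in some $K_n$, from which both conclusions follow at once.
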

\begin{proof}
$(\Leftarrow)$
As $C_k(X,2)$ is a topological group, its metrizability
is equivalent to its first countability at $c_0$, the
constant zero function.

$(\Rightarrow)$ Assume that $C_k(X,2)$ is metrizable and fix a countable base
$\{W_n:n\in\w\}$ at $c_0$. Without loss of generality,
$W_n=\{f\in C_k(X,2):f\uhr K_n\equiv 0\}$
for some compact $K_n\sbst X$, and $K_n\sbst K_{n+1}$ for all $n$.
It suffices to prove that for every $x\in X$ there are a neighborhood
$U$ of $x$ and $n\in\w$, such that $U\sbst K_n$. If not, we can find $x\in X$
and a sequence $(x_n)_{n\in\w}$ of elements of $X$ such that $x_n\in U_n\setminus
K_n$, where $\{U_n:n\in\w\}$ is a decreasing base at $x$.
Set $K=\{x\}\cup\{x_n:n\in\w\}$ and $W=\{f\in C_k(X,2):f\uhr K\equiv 0\}$.
Since $K_n\cap K$ is finite for every $n\in\w$, there exists a function
$f\in C_k(X,2)$ such that $f\uhr K_n\equiv 0$ but $f\uhr K\not\equiv 0$, and hence
$W_n\not\sbst W$ for all $n\in\w$. This contradicts our assumption that
$\{W_n\}$ is a local base at $c_0$.
\end{proof}

For a topological space $X$, $X'$ is the set of all non-isolated points of $X$.

\begin{thm} \label{t3_2}
Let $X$ be a zero-dimensional Polish space which is not locally compact.
Then $C_k(X,2)$ is sequential if and only if the derived set $X'$ is compact.
\end{thm}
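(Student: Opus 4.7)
The plan is to prove both directions, with $(\Leftarrow)$ adapting the argument of Theorem~\ref{t3_1} and $(\Rightarrow)$ combining Corollary~\ref{c1} with a product decomposition that defeats $\sigma$-compactness.

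For $(\Leftarrow)$, assume $X'$ is compact. Since $X'$ is a compact $G_\delta$ in the zero-dimensional Polish space $X$, I would fix a decreasing clopen base $\{U_n\}_{n\in\w}$ of neighborhoods of $X'$ with $U_0=X$ and $\bigcap_n U_n=X'$. Given a sequentially closed $A\sbst C_k(X,2)$ with $c_0\in\cl{A}\sm A$ (by homogeneity), compactness of $X'$ lets me replace $A$ by the still sequentially closed set $\{f\in A:f\uhr X'\equiv 0\}$, whose closure still contains $c_0$ (since any compact $K$ can be enlarged to $K\cup X'$, still compact). Continuity then gives $A=\bigcup_n A_n$ with $A_n=\{f\in A:f\uhr U_n\equiv 0\}$ increasing. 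The key observation is that $X\sm U_{n+1}\sbst X\sm X'$ is a discrete Polish space, hence countable, so $2^{X\sm U_{n+1}}$ is a Cantor space; this is precisely what is needed to transfer the inductive construction of Theorem~\ref{t3_1} verbatim, producing finite $L_n\sbst U_n\sm U_{n+1}$ with $1\in f(L_n)$ for every $f\in A_{n+1}$. The set $K=X'\cup\bigcup_n L_n$ is compact (since $\{U_n\}$ is a neighborhood base of $X'$, any open cover of $X'$ eventually covers some $U_m$, and $K\sm U_m$ is finite), and $\{f:f\uhr K\equiv 0\}$ is a neighborhood of $c_0$ disjoint from $A$---the desired contradiction.

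For $(\Rightarrow)$, assume $C_k(X,2)$ is sequential. I would first verify separability: the countable Boolean algebra $\A$ generated by a countable clopen base of $X$ yields a dense set $\{\chi_A:A\in\A\}$, because for any $f\in C_k(X,2)$ and compact $K\sbst X$, the clopen $f^{-1}(1)\cap K$ is covered by finitely many base clopens contained in $f^{-1}(1)$, whose union lies in $\A$ and agrees with $f$ on $K$. By Lemma~\ref{metr} and non-local-compactness of $X$, $C_k(X,2)$ is non-metrizable; countable $\mathrm{cs}^\ast$-character is inherited from the strong Pytkeev property of $C_k(X)$. Corollary~\ref{c1} therefore forces $C_k(X,2)$ to be $\sigma$-compact, and it remains to contradict this when $X'$ is non-compact.

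Assume that $X'$ is not compact. Then the Polish space $X'$ admits a countably infinite closed discrete subset $\{d_n:n\in\w\}$, which is also closed discrete in $X$. Fixing a compatible metric on $X$ and using zero-dimensionality, I would pick pairwise disjoint clopens $W_n\ni d_n$ of diameter tending to zero; since $\{d_n\}$ has no accumulation point, $Y:=\bigsqcup_n W_n$ must be closed (a convergent sequence $y_k\to y$ with $y_k\in W_{n_k}$, $n_k\to\infty$, would force $d_{n_k}\to y$), hence clopen, giving $C_k(X,2)\cong C_k(X\sm Y,2)\times\prod_n C_k(W_n,2)$. In each $W_n$ I would choose points $v_{n,k}\to d_n$ together with pairwise disjoint clopens $V_{n,k}\ni v_{n,k}$ shrinking to $d_n$ and avoiding $d_n$; the indicators $\{\chi_{V_{n,k}}:k\in\w\}$ form a closed discrete infinite subset of $C_k(W_n,2)$ (any accumulation point $g$ would satisfy $g(d_n)=0$ and, by continuity, vanish on a clopen neighborhood of $d_n$, contradicting $\chi_{V_{n,k}}(v_{n,k})=1$ for $v_{n,k}$ in that neighborhood), so $C_k(W_n,2)$ is not compact. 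A standard diagonal argument---choosing $g_n\in C_k(W_n,2)\sm\pi_n(K_n)$ for each countable family $\{K_n\}$ of compact subsets of $\prod_m C_k(W_m,2)$ and assembling $(g_n)_n$---then shows $\prod_n C_k(W_n,2)$ is not $\sigma$-compact, and since $\sigma$-compactness passes to direct factors, neither is $C_k(X,2)$. The main obstacle will be this last paragraph: arranging $Y$ to be clopen and verifying non-compactness of each $C_k(W_n,2)$, both of which genuinely use zero-dimensionality and the Polish structure of $X$.
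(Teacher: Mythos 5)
Your argument is correct and follows essentially the same route as the paper: for $(\Leftarrow)$ the paper packages your in-situ rerun of Theorem~\ref{t3_1} as a homeomorphism of the open subgroup $\{f:f\uhr X'\equiv 0\}$ with $\{g\in C_k(M,2):g(\infty)=0\}$ via a map collapsing $X'$ to $\infty$ and sending $U_n\setminus U_{n+1}$ onto $\{n\}\times\w$, and for $(\Rightarrow)$ it uses exactly your combination of Corollary~\ref{c1} with the non-$\sigma$-compactness of the product of the non-compact spaces $C_k(W_n,2)$ over a closed discrete subset of $X'$. One small correction: the induction yields $1\in f\bigl(\bigcup_{i\le n}L_i\bigr)$ for $f\in A_{n+1}$, not $1\in f(L_n)$ (a function in $A_{n+1}$ may be nonzero only in an earlier ring $U_i\setminus U_{i+1}$), but since you only use that $K\supseteq\bigcup_n L_n$, nothing breaks.
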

\begin{proof}
Assume that $X'$ is compact and consider the subgroup
$H=\{f\in C_k(X,2):f\uhr X'\equiv 0\}$.  $H$ is an open subgroup of
$C_k(X,2)$, and thus it suffices to prove that $H$ is
sequential. Since $X$ is not locally compact,  there is a clopen
outer base $\{U_n:n\in\w\}$ of $X'$ such that $U_0=X$,
$U_{n+1}\sbst U_n$, and $U_n\setminus U_{n+1}$ is infinite for
all $n\in\w$. Let $f:X\to M$ be a map such that $f(X')=\{\infty\}$
and $f\uhr(U_n\setminus U_{n+1})$ is an injective map onto $
\{n\}\times\w$. Then the map
$$f^\ast: \{g\in C_k(M,2):g(\infty)=0\}\to H $$
assigning to $g$ the composition $g\circ f$ is easily seen to be a
homeomorphism, and hence $H$ is sequential.
\medskip

Now assume that $X'$ is not compact.
  Then  there exists a  countable
closed discrete subspace $T\sbst X'$, and hence there exists a
discrete family $\{U_t:t\in T\}$ of clopen  subsets of $X$ such
that $t\in U_t$ for all $t\in T$.  $C_k(X,2)$ contains a
closed copy of the product $\Pi_{t\in T} C_k(U_t,2)$.

\begin{clm} \label{l2}
Let $Z$ be a non-discrete metrizable separable zero-dimensional
space. Then $C_k(Z,2)$ is not compact.
 \end{clm}
\begin{proof}
If $Z$ is locally compact, then it contains a clopen infinite
compact subset $C$. Then $C_k(C,2)$ is a closed subset of
$C_k(Z,2)$ homeomorphic to $\w$, and hence $C_k(Z,2)$
is not compact.

If $Z$ is not locally compact, then $Z$ contains a closed copy $Y$
of $M$. By Lemma \ref{l2_1}, $C_k(Y,2)$ contains a closed copy of $S_2$,
and is thus not compact. As restriction to $Y$ is a continuous map
from $C_k(Z,2)$ onto $C_k(Y,2)$, $C_k(Z,2)$ is not compact.
\end{proof}

\begin{clm} \label{cl1}
If none of the spaces $X_i$, $i\in\w$, is compact, then the
product $\Pi_{i\in\w}X_i$ is not $\sigma$-compact.
\end{clm}
\begin{proof}
A simple diagonalization argument.
\end{proof}

Since $T\sbst X'$, $U_t$ is not discrete for all $t\in T$.
By Claims~\ref{cl1} and \ref{l2}, the product $\Pi_{t\in T} C_k(U_t,2)$ is not $\sigma$-compact.
Thus, $C_k(X,2)$ is not $\sigma$-compact.

As $X$ is Polish, $C_k(X)$ has the strong Pytkeev property \cite{tz},
and thus has countable $\mathrm{cs}^*$-character. Consequently, so does
its subspace $C_k(X,2)$.
As $C_k(X,2)$ is separable and $X$ is not locally compact, $C_k(X,2)$ is not first countable,
and hence it is not metrizable.
Apply Corollary~\ref{c1}.
\end{proof}

\begin{cor}
$C_k(\w\times M, 2)$ is not sequential.\qed
\end{cor}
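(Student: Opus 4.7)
The plan is to apply Theorem~\ref{t3_2} directly with $X := \w\times M$. Three hypotheses have to be verified: that $X$ is a zero-dimensional Polish space, that $X$ is not locally compact, and that the derived set $X'$ is not compact.

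Zero-dimensionality and being Polish pass to countable products, so the only non-trivial point is that $M$ itself is Polish. A compatible complete metric is given by $d(\infty,(n,k)) = 1/(n+1)$ and $d((n,k),(n',k')) = 1/(n+1) + 1/(n'+1)$ for distinct pairs: every ball around $(n,k)$ of radius $<2/(n+1)$ is a singleton, the neighborhoods of $\infty$ are exactly the sets $U(n)$, and every Cauchy sequence is either eventually constant or has its first coordinates tending to $\infty$ and thus converges to $\infty$. Local compactness fails already in $M$ at $\infty$, since each basic neighborhood $U(n)$ contains the closed discrete infinite set $\{n\}\times\w$; this obstruction is inherited by $\w\times M$ at any point $(k,\infty)$.

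The remaining step is the identification of $(\w\times M)'$. In a product of first countable spaces a point is isolated if and only if each coordinate is isolated, so since every point of $\w$ is isolated and $\infty$ is the unique non-isolated point of $M$, we have $(\w\times M)' = \w\times\{\infty\}$, a countably infinite closed discrete subspace and hence not compact. Theorem~\ref{t3_2} then gives that $C_k(\w\times M,2)$ is not sequential. I do not anticipate any real obstacle beyond these verifications; the corollary is essentially bookkeeping once the theorem is available.
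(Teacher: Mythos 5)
Your argument is exactly the one the paper intends: the corollary is an immediate application of Theorem~\ref{t3_2} to $X=\w\times M$, which is zero-dimensional Polish, not locally compact, and has derived set $\w\times\{\infty\}$, an infinite closed discrete (hence non-compact) set. The only slip is in your explicit metric on $M$: the ball of radius $r$ about $(n,k)$ contains every $(n',k')$ with $1/(n'+1)<r-1/(n+1)$, so it is a singleton only for $r\le 1/(n+1)$, not for all $r<2/(n+1)$; this is harmless, since each $(n,k)$ is still isolated and the rest of the verification stands.
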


Let $(0)\in 2^\w$ be the constant zero sequence. Following \cite{ban}, let $(2^\w)^\infty$ be the space $\bigcup_{n\in\w}(2^{\w})^n$,
where $(2^{\w})^n$ is identified
with the subspace  $(2^{\w})^n\times \{(0)\}$ of $(2^{\w})^{n+1}$,
with the inductive topology with respect to the cover
$\{(2^\w)^n: n\in\w\}$.

\begin{thm}[Banakh \cite{ban}]\label{ban_t}
Every non-metrizable uncountable zero-dimensional
${mk}_\omega$-group is homeomorphic to $(2^\w)^{\infty}$.
\end{thm}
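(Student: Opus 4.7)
The plan is to realize $G$ as the direct limit of a tower of Cantor sets, each nowhere dense in the next, and then invoke a topological uniqueness theorem to identify this direct limit with $(2^\w)^\infty$.

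First, I would write $G=\bigcup_{n\in\w}K_n$, with each $K_n$ compact metrizable, $K_n\sbst K_{n+1}$, and $G$ carrying the inductive topology with respect to $\{K_n\}$. Using continuity of multiplication and inversion, I would pass to a cofinal subsequence so that each $K_n$ is symmetric, contains the identity $e$, and satisfies $K_n\cdot K_n\sbst K_{n+1}$. Second, I would show each $K_n$ is nowhere dense in $G$: if some $K_n$ had nonempty interior, then by homogeneity it would contain a neighborhood of $e$, and since $K_n$ is metrizable compact, $e$ would have a countable neighborhood base; by the Birkhoff--Kakutani theorem this would force $G$ to be metrizable, contradicting the hypothesis. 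Passing again to a subsequence, I may then assume $K_n$ is nowhere dense in $K_{n+1}$ for every $n$.

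Third, I would upgrade the $K_n$ to Cantor sets. Since $G$ is a zero-dimensional topological group, it is perfect, and since $G$ is uncountable, all sufficiently large $K_n$ are uncountable. Each such $K_n$, being compact, metrizable, and zero-dimensional, splits via Cantor--Bendixson into a Cantor perfect kernel and a countable scattered remainder. By translating inside $G$ (using $K_n\cdot K_n\sbst K_{n+1}$ to remain in the tower) and amalgamating the results, I would build a cofinal tower of Cantor sets $C_0\sbst C_1\sbst\cdots$ with $C_n$ nowhere dense in $C_{n+1}$ and $G=\bigcup_nC_n$ carrying the inductive topology. The delicate point here is absorbing the scattered parts into Cantor pieces while preserving nowhere-denseness at each level.

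Finally, I would invoke the topological characterization of $(2^\w)^\infty$: the inductive limit of any tower of Cantor sets, each nowhere dense in the next, is homeomorphic to $(2^\w)^\infty$. This is proved by a back-and-forth construction of compatible homeomorphisms $h_n\colon C_n\to(2^\w)^n$, using that any two nowhere dense embeddings of one Cantor set into another agree up to an ambient homeomorphism --- a consequence of the Knaster--Reichbach homeomorphism extension theorem --- so each $h_n$ extends to $h_{n+1}$ along the inclusion $C_n\hookrightarrow C_{n+1}$. The main obstacle is precisely this extension-with-compatibility step: one must simultaneously extend each $h_n$ to $h_{n+1}$ and match the canonical inclusion $(2^\w)^n\hookrightarrow(2^\w)^{n+1}$, so that the $h_n$ glue to a continuous bijection on the direct limits, which is then automatically a homeomorphism because both sides carry the inductive topology from a tower of compacta.
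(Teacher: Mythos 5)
The paper offers no proof of this statement---it is imported verbatim from Banakh's paper \cite{ban}---so I can only assess your outline on its own terms. Your overall architecture (normalize the tower, force nowhere-density, upgrade to Cantor sets, then identify the limit via the Knaster--Reichbach extension theorem) is indeed the standard route to this classification, and your final step is sound: once one has $G=\bigcup_n C_n$ with each $C_n$ a Cantor set closed and nowhere dense in $C_{n+1}$, Knaster--Reichbach lets each homeomorphism $h_n\colon C_n\to(2^\w)^n$ extend to $h_{n+1}\colon C_{n+1}\to(2^\w)^{n+1}$, and the union is a homeomorphism because both sides carry the inductive topology.

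The genuine gap is the sentence ``Passing again to a subsequence, I may then assume $K_n$ is nowhere dense in $K_{n+1}$.'' This does not follow from $K_n$ being nowhere dense in $G$, and no passage to a subsequence can rescue the inference in general $k_\w$-spaces. Concretely: start with $K=2^\w$, fix clopen sets $U_1\supsetneq U_2\supsetneq\cdots$ with $\bigcap_m U_m=\{p\}$, and let $K_m$ be $K$ with a convergent sequence attached to each point of $2^\w\sm U_j$ for $j\le m$. Then $\mathrm{int}_{K_m}(K)\supseteq U_m\ne\emptyset$ for every $m$, yet $\mathrm{int}_X(K)\sbst\bigcap_m U_m=\{p\}$ is empty in the direct limit $X=\varinjlim K_m$; since every compact subset of a $k_\w$-space lies in some level of the tower, $K$ is nowhere dense in $X$ but is not nowhere dense in \emph{any} compact subset of $X$ containing it. So the implication you are using is false, and the obstruction is exactly where the group structure must enter (my $X$ is not homogeneous: the point $p$ is distinguished). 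Producing, for each compact $K\sbst G$, a larger compact set in which $K$ is nowhere dense is the real content of Banakh's argument and has to be done by hand, e.g.\ by multiplying $K$ by suitably chosen compact sets converging to the identity and controlling the set of points of $K$ that such translates fail to approximate from outside $K$.

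Two smaller soft spots: ``$G$ is a zero-dimensional topological group, hence perfect'' is false as stated (discrete groups are zero-dimensional); what you need is that $G$ is non-metrizable, hence non-discrete, hence by homogeneity has no isolated points. And the ``upgrade to Cantor sets'' step is waved at rather than done; here the group structure again gives a clean fix that your ``translating and amalgamating'' should be replaced by: fix a Cantor set $C\ni e$ inside some $K_{n_0}$ (it exists since some level is uncountable) and replace $K_n$ by $K_nC$, which is compact, zero-dimensional, metrizable and perfect---every point $kc$ lies on the Cantor set $kC$---hence itself a Cantor set containing $K_n$.
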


\begin{cor} \label{main_cor}
For zero-dimensional Polish spaces $X$, the following are equivalent:
\begin{enumerate}
\item $C_k(X,2)$ is sequential but not metrizable;
\item $C_k(X,2)$ is homeomorphic to $(2^\w)^\infty$;
\item $X$ is not locally compact
but $X'$ is compact.
\end{enumerate}
\end{cor}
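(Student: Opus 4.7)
The plan is to prove $(1)\Leftrightarrow(3)$ as an immediate consequence of the results in this section, and then prove $(1)\Leftrightarrow(2)$, whose nontrivial direction $(1)\Rightarrow(2)$ uses the structure theorems~\ref{mainbz} and~\ref{ban_t}.

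For $(1)\Leftrightarrow(3)$: since $X$ is Polish, hence Lindel\"of, $X$ locally compact is the same as $X$ locally compact and $\sigma$-compact, and Lemma~\ref{metr} then reads ``$C_k(X,2)$ non-metrizable iff $X$ not locally compact''. Combined with Theorem~\ref{t3_2} this is precisely $(1)\Leftrightarrow(3)$. The direction $(2)\Rightarrow(1)$ is also immediate: $(2^\w)^\infty$ is $mk_\omega$ (hence sequential) and non-metrizable because each $(2^\w)^n$ is nowhere dense in $(2^\w)^{n+1}$, so no point has a countable base.

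The content is in $(1)\Rightarrow(2)$. Assume (1); by the already-established $(1)\Rightarrow(3)$, $X$ is not locally compact and $X'$ is compact. The topological group $C_k(X,2)$ is separable, non-metrizable, sequential, and has countable $\mathrm{cs}^\ast$-character (as a subspace of $C_k(X)$, which has the strong Pytkeev property by \cite[Corollary~8]{tz}). Theorem~\ref{mainbz} then supplies an open $mk_\omega$-subgroup $H\leq C_k(X,2)$ and a discrete space $D$ with $C_k(X,2)\cong H\times D$. Separability of $C_k(X,2)$ forces $D$ to be countable, so $C_k(X,2)$ is a topological disjoint union of countably many $mk_\omega$-cosets of $H$ and hence is itself an $mk_\omega$-group. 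It is zero-dimensional (as a subspace of $2^X$), non-metrizable by hypothesis, and uncountable: by the proof of Theorem~\ref{t3_2}, the open subgroup $\{f\in C_k(X,2): f\uhr X'\equiv0\}$ is homeomorphic to $\{g\in C_k(M,2): g(\infty)=0\}$, which has cardinality $\mathfrak{c}$. Banakh's Theorem~\ref{ban_t} then identifies $C_k(X,2)$ with $(2^\w)^\infty$, completing the argument.

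The main obstacle is the $(1)\Rightarrow(2)$ step: the output of Theorem~\ref{mainbz} is only an \emph{open subgroup} that is $mk_\omega$, whereas Banakh's theorem requires the whole group to be $mk_\omega$. Bridging this gap relies on the countability of the discrete factor $D$, and hence on the separability of $C_k(X,2)$; the remaining hypotheses for Banakh (zero-dimensionality, non-metrizability, uncountability) are routine.
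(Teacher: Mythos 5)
Your proposal is correct and follows essentially the same route as the paper: $(1)\Leftrightarrow(3)$ via Lemma~\ref{metr} and Theorem~\ref{t3_2}, and $(1)\Rightarrow(2)$ via the countable $\mathrm{cs}^\ast$-character from \cite{tz}, Theorem~\ref{mainbz}, separability to upgrade the open $mk_\omega$-subgroup to the whole group, and then Theorem~\ref{ban_t}. You merely make explicit some steps the paper leaves tacit (countability of the discrete factor $D$, the uncountability and zero-dimensionality hypotheses of Banakh's theorem, and the easy direction $(2)\Rightarrow(1)$).
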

\begin{proof}
$(1)\to (3)$. Since $C_k(X,2)$ is not metrizable, $X$ is not locally compact
(Lemma~\ref{metr}). By Theorem~\ref{t3_2}, $X'$ is compact.

$(3)\to(1)$. Since $X$ is not locally compact, $C_k(X,2)$ is not metrizable
(Lemma~\ref{metr}). By Theorem~\ref{t3_2}, the compactness of $X'$ implies
that $C_k(X,2)$ is sequential.

$(1)\to(2)$.
By \cite[Corollary~8]{tz}, $C_k(X,2)$ has countable $\mathrm{cs}^*$-character.
Applying Theorem~\ref{mainbz}, we have that $C_k(X,2)$ contains
an open $mk_\w$-subgroup. Since $C_k(X,2)$
is separable, it is an $mk_\w$-group.
Apply Theorem~\ref{ban_t}.
\end{proof}

$C_k(\w\times M,2)$ is homeomorphic to $C_k(M,2)^\w$, and hence to $((2^\w)^\infty)^\w$.
Thus, a negative answer to the following question would imply that
$C_k(P)$ is not WAP for ``most'' Polish spaces, including $\w^\w$ and some $\sigma$-compact
ones.

\begin{qu}
Does the space  $((2^\w)^\infty)^\w$ have the WAP property?
What about $S_2^\w$?
\end{qu}

\end{document}